\newcommand{\arcs}[1]{\stackrel{\rotatebox{90}{\negthinspace\hspace{1pt}\big)}}{#1}} 
\newtheorem{thm}{Theorem}
\newtheorem{definition}[thm]{Definition}
\newtheorem{prop}[thm]{Proposition}
\newtheorem{lemma}[thm]{Lemma}
\newtheorem{cor}[thm]{Corollary}
\newtheorem{exam}[thm]{Example}
\def\ds{\displaystyle}
\def\R{\mathbb R}
\def\om{\omega}
\def\d{\delta}
\def\eps{\varepsilon}
\title{Lipschitzness of the width and diameter functions of convex bodies in $\R^n$}
\author{Oleg Mushkarov}
\address{O. Mushkarov\\Institute of Mathematics and Informatics\\Bulgarian Academy
of Sciences\\Acad. G. Bonchev 8, 1113 Sofia, Bulgaria}
\email{muskarov@math.bas.bg}
\author{Nikolai Nikolov}
\address{N. Nikolov\\Institute of Mathematics and Informatics\\Bulgarian Academy
of Sciences\\Acad. G. Bonchev 8, 1113 Sofia, Bulgaria
\vspace{1mm}
\newline Faculty of Information Sciences\\
State University of Library Studies and Information Technologies\\
Shipchenski prohod 69A, 1574 Sofia,
Bulgaria}
\email{nik@math.bas.bg}
\author{Pascal J. Thomas}
\address{P.J. Thomas\\
Institut de Math\'ematiques de Toulouse; UMR5219 \\
Universit\'e de Toulouse; CNRS \\
UPS, F-31062 Toulouse Cedex 9, France}
\email{pascal.thomas@math.univ-toulouse.fr}
\thanks{The first and the second named authors were partially supported by the Bulgarian National
Science Fund, Ministry of Education and Science of Bulgaria under contract KP-06-N52/3.}
\keywords{convex body, diameter, width, thickness, Lipschitzness}
\subjclass[2020]{Primary 52A20.}
\begin{document}

\begin{abstract} Lipschitz constants for the width and diameter functions of a convex body in
$\R^n$ are found in terms of its diameter and thickness (maximum
and minimum of both functions). Also, a dual approach to thickness
is proposed.
\end{abstract}

\maketitle

\section{Basic definitions and facts}

Let $K$ be a convex body in $\R^n$ ($n\ge 2$), i.e. a convex
compactum with nonempty interior. As is well-known, $\R^n\setminus
K$ is a union of (affine) half-spaces, and any hyperplane which
intersects $K$ without intersecting its interior is called a
\emph{supporting hyperplane}. It is interesting to measure the
width of a convex body by looking at how much two parallel
supporting hyperplanes must be distant from each other. This
depends continuously on the direction of those hyperplanes. The
present paper aims at improving the results about the
variation of the width, and of analogous quantities. First we give
more precise definitions.

\begin{definition}
For any vector $u\in S^{n-1}$ (the unit sphere in $\R^n$),
\begin{itemize}
\item
the \emph{width} of $K$ in direction $u,$
denoted $w_K(u)$, is the distance between
the two supporting hyperplanes of $K$ which are orthogonal to $u;$
\item
the \emph{diameter} of $K$ in direction $u,$ denoted $d_K(u)$, is the maximal length of a
chord of $K$ parallel to $u.$
\end{itemize}
\end{definition}

Note that $w_K$ and $d_K$ are continuous functions on $S^{n-1},$
$w_K\ge d_K,$ and $\max w_K=\max d_K=\d_K:=\mbox{diam }K.$

The number $\om_K=\min w_K$ is called the \emph{thickness} of $K.$
It turns out that $\om_K=\min d_K.$

We point out that $|AB|=d_K(\overrightarrow{AB}/|AB|)$ ($A,
B\in\partial K$) exactly when $K$ admits parallel supporting
hyperplanes at $A$ and $B.$ Such a chord $[AB]$ is called
\emph{diametral}. It is called \emph{double normal} if the
supporting hyperplanes can be chosen orthogonal to $AB.$ If
$|AB|=d_K(\overrightarrow{AB}/|AB|)=\om_K,$ then $[AB]$ is a
double normal chord.

The facts listed above  can be found e.g.~in \cite{Sol} (see also \cite{MW} about $w_K$ and $\om_K$).
\smallskip

It is known that $w_K$ is a Lipschitz function w.r.t. the Euclidean
norm $||\cdot||$ with constant $\d_K$ (see
\cite[Proposition]{MW}). This constant does not take into account
the fact that for some convex bodies, the width can
vary much less---for instance, there are convex bodies
of constant width, the obvious example being
the Euclidean ball.

In Section \ref{lipcst}, we will prove that $w_K$  and $d_K$ are
Lipschitz functions with constants
$$M_K=\sqrt{\d_K^2-\om_K^2}\mbox{\ \ resp.\ \ }N_K=\frac{\d_K}{\om_K}\sqrt{\d_K^2-\om_K^2},\mbox{\ }$$
(which vanish for bodies of constant width)
w.r.t.~$\rho(u,v)=\arccos|\langle u, v\rangle|$,
the spherical distance on the projective space  $S^{n-1}/\pm I$, which is a pseudo-distance on $S^{n-1}$. Note that $\rho(u,v)\le\frac{\pi}{2\sqrt2}||u\pm v||.$
Those results are refined in Section   \ref{refine}.

In Section \ref{pole}, we study yet another notion of width;
while we previously were looking at chords
parallel to each other, so passing through the same point at
infinity in the projective space, now we consider chords all
passing through a given  point in affine space.
This leads to some interesting continuity phenomena.

\section{Lipschitzness of $w_K$ and $d_K$}
\label{lipcst}

For $u,v \in S^{n-1}, u\neq \pm v $, denote
\[
\Delta w_K(u,v) := \frac{|w_K(u)-w_K(v)|}{\rho(u,v)}, \quad
\Delta d_K(u,v) := \frac{|d_K(u)-d_K(v)|}{\rho(u,v)}.
\]

\begin{thm}
\label{main} For any convex body $K$ in $\R^n$ and $u, v\in
S^{n-1}$:
\begin{enumerate}
\item $\Delta w_K(u,v)\le M_K$;
\item $\Delta d_K(u,v)\le N_K$.
\end{enumerate}
\end{thm}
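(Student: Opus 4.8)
The plan is to restrict both functions to the shorter great-circle arc joining the two directions and to bound their derivative along that arc by the claimed constant. Given $u\neq\pm v$, replace $v$ by $-v$ if necessary (legitimate since $w_K,d_K$ are even and $\rho$ lives on $S^{n-1}/\pm I$) so that $\langle u,v\rangle\ge0$, and set $u_\theta=\cos\theta\,u+\sin\theta\,e$, where $e\perp u$ is the unit vector in $\mathrm{span}(u,v)$ with $u_\rho=v$ and $\rho=\rho(u,v)$. Writing $h_K(x)=\max_{y\in K}\langle y,x\rangle$ for the support function, one has $w_K(u_\theta)=h_K(u_\theta)+h_K(-u_\theta)$. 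Both $\theta\mapsto w_K(u_\theta)$ and $\theta\mapsto d_K(u_\theta)$ are Lipschitz, hence absolutely continuous, so it suffices to prove $|\tfrac{d}{d\theta}w_K(u_\theta)|\le M_K$ and $|\tfrac{d}{d\theta}d_K(u_\theta)|\le N_K$ at a.e.\ $\theta$ and then integrate over the arc of length $\rho$.

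For the width, fix $\theta$ where the derivative exists and let $x_+,x_-\in K$ be contact points realizing $h_K(u_\theta)$ and $h_K(-u_\theta)$. Since $u_\theta'=\tfrac{d}{d\theta}u_\theta$ is a unit vector orthogonal to $u_\theta$, differentiating gives $\tfrac{d}{d\theta}w_K(u_\theta)=\langle x_+-x_-,u_\theta'\rangle$. By the choice of contact points, $\langle x_+-x_-,u_\theta\rangle=h_K(u_\theta)+h_K(-u_\theta)=w_K(u_\theta)\ge\om_K$, while $|x_+-x_-|\le\d_K$; hence the component of $x_+-x_-$ orthogonal to $u_\theta$ has norm at most $\sqrt{\d_K^2-\om_K^2}=M_K$, and since $u_\theta'\perp u_\theta$ we conclude $|\tfrac{d}{d\theta}w_K(u_\theta)|\le M_K$. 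This proves (1).

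For the diameter, let $[A,B]$ be a maximal chord parallel to $u_\theta$, so $B-A=d_K(u_\theta)\,u_\theta$, and let $n\in S^{n-1}$ be the common normal of parallel supporting hyperplanes at $A$ and $B$ (oriented so $\langle u_\theta,n\rangle>0$); such $n$ exists by the characterization of diametral chords recalled before the theorem. Two relations drive the estimate. First, because $A$ and $B$ are contact points for the direction $n$, their velocities are tangential, giving $\langle B'-A',n\rangle=0$ a.e. Second, differentiating $B-A=d_K(u_\theta)u_\theta$ yields $B'-A'=\tfrac{d}{d\theta}d_K(u_\theta)\,u_\theta+d_K(u_\theta)\,u_\theta'$. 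Pairing the latter with $n$ and using the former gives $\tfrac{d}{d\theta}d_K(u_\theta)=-\,d_K(u_\theta)\,\langle u_\theta',n\rangle/\langle u_\theta,n\rangle$. Now $|\langle u_\theta',n\rangle|\le\sqrt{1-\langle u_\theta,n\rangle^2}$ (as $u_\theta'\perp u_\theta$), and projecting $B-A$ onto $n$ gives the key identity $w_K(n)=d_K(u_\theta)\langle u_\theta,n\rangle$. Setting $D=d_K(u_\theta)$ and $W=w_K(n)$, so $\langle u_\theta,n\rangle=W/D$, the bound becomes $|\tfrac{d}{d\theta}d_K(u_\theta)|\le D\sqrt{D^2-W^2}/W$; since $\om_K\le W\le D\le\d_K$, this expression is maximized at $W=\om_K$, $D=\d_K$, yielding $N_K$. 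Integrating proves (2).

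The main obstacle is regularity. The clean derivative formulas above presuppose that the contact points, the chord $[A,B]$, and the normal $n$ can be chosen to depend differentiably (or at least absolutely continuously) on $\theta$, which fails at directions where supporting hyperplanes or maximal chords are non-unique. I would make this rigorous by working only at the a.e.\ points where $\theta\mapsto w_K(u_\theta)$ (resp.\ $d_K(u_\theta)$) is differentiable, replacing exact derivatives by one-sided difference quotients controlled through the convexity of $h_K$ and the one-sided differentiability of support functions; the identity $\langle B'-A',n\rangle=0$ would likewise be replaced by the inequalities expressing that pushing either endpoint off its supporting hyperplane cannot lengthen an admissible chord. For the diameter one must also invoke the existence, for every direction, of a diametral maximal chord with a common supporting normal, which is precisely the fact recalled just before the statement.
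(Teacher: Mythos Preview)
Your route is genuinely different from the paper's. The paper first reduces to convex polytopes, removes a finite union of exceptional great subspheres where the contact data are non-unique, computes $w_K'$ and $d_K'$ explicitly at the remaining generic directions (by elementary planar geometry, including the law of sines for $d_K$), and then patches the arcs together by continuity. You instead work directly with an arbitrary body along a single great-circle arc, using the support function for $w_K$ and a comparison of the moving diametral chord with a fixed pair of supporting hyperplanes for $d_K$. This avoids the polytope approximation and the bookkeeping of exceptional hyperplanes; the price is that you must handle nonsmoothness by hand, which you do correctly for $w_K$ and sketch for $d_K$.

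Your Part (1) is essentially rigorous as written: $h_K$ is convex, so along the arc it is locally Lipschitz; at any point of differentiability the derivative equals $\langle y,u_\theta'\rangle$ for every contact point $y$ (since $\psi(\theta)=\langle y,u_\theta\rangle$ touches $h_K(u_\theta)$ from below), and your Pythagorean estimate $|\langle x_+-x_-,u_\theta'\rangle|\le\sqrt{|x_+-x_-|^2-w_K(u_\theta)^2}\le M_K$ is exactly right.

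For Part (2) there is one soft spot and one clean fix. The soft spot is the line ``both $\theta\mapsto d_K(u_\theta)$ are Lipschitz, hence absolutely continuous'': unlike $w_K$, this is not given a priori, and using a.e.\ differentiability here risks circularity. The fix is the very inequality you allude to at the end. With $[A_0B_0]$ a diametral chord for $u_{\theta_0}$ and $n_0$ a common supporting normal, the fact that any diametral chord for $u_\theta$ has both endpoints in $K$ (hence between the two $n_0$-hyperplanes) gives the pointwise barrier
\[
d_K(u_\theta)\,\langle u_\theta,n_0\rangle\ \le\ d_K(u_{\theta_0})\,\langle u_{\theta_0},n_0\rangle,
\]
with equality at $\theta_0$. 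Thus $d_K(u_\theta)\le g_{\theta_0}(\theta):=d_K(u_{\theta_0})\,\langle u_{\theta_0},n_0\rangle/\langle u_\theta,n_0\rangle$, a smooth function whose derivative at $\theta_0$ is $-d_K(u_{\theta_0})\langle u_{\theta_0}',n_0\rangle/\langle u_{\theta_0},n_0\rangle$, bounded in absolute value by $N_K$ via your identity $w_K(n_0)=d_K(u_{\theta_0})\langle u_{\theta_0},n_0\rangle$. A continuous function that at every point is dominated by a $C^1$ barrier touching it with slope in $[-N_K,N_K]$ is $N_K$-Lipschitz (compare Dini derivatives), so no a.e.\ differentiability needs to be assumed. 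With this replacement your argument for (2) is complete; the formal ``$\langle B'-A',n\rangle=0$'' step should simply be dropped in favor of the barrier inequality above.
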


\begin{proof}

Since $K$ is the intersection of convex polytopes $K_1\supset K_2\supset\dots$
and $w_{K_i}\downarrow w_K,$ $d_{K_i}\downarrow d_K,$ $\d_{K_i}\downarrow\d_K,$ and
$\om_{K_i}\downarrow\om_K,$ we may assume that $K$ is a convex polytope.
\smallskip

{\it Proof of Part (1).} Since $w_K$ is already known to be
continuous, it will be enough to prove the inequality outside of a
set of dimension $n-2$ on the unit sphere. Let $E$ be an edge of
$K$ (non-trivial line segment contained in $\partial K$ such that
its relative interior is not contained in the relative interior of
any face of larger dimension), and $H_E$ the  hyperplane of
vectors orthogonal to $E$. For further reference, call those
\emph{exceptional hyperplanes}. Let $u\in \mathcal W_K := S^{n-1}
\setminus \bigcup_{E \mbox{ \small edge of }K} H_E$.  Then $u$ is
not orthogonal to any face of $\partial K$ of any dimension $d$,
$1\le d\le n-1$. For such a vector $u$, each of the two supporting
hyperplanes $H_1$ and $H_2$ orthogonal to $u$ intersects $K$ at a
single point, denoted respectively by $A$ and $B$.

\begin{lemma}
\label{limw}
For any $u \in \mathcal W_K$ and $A, B$ chosen as above,
\[
w_K'(u):= \limsup_{u',u''\to u} \Delta w_K(u',u'')
= \sqrt{|AB|^2-w^2_K(u)} \le M_K.
\]
\end{lemma}
\begin{proof}
For any $v$ close enough to $u$, the corresponding supporting
hyperplanes intersect $K$ at the same points $A$ and $B$, so that
$w_K(v)= |\langle AB, v\rangle|$ and
\[
\frac{|w_K(u')-w_K(u'')|}{\|u'-u''\|} = \left| \left\langle AB, \frac{u'-u''}{\|u'-u''\|}\right\rangle\right|.
\]
Note that $\langle u'-u'', (u'+u'')/2 \rangle=0$ and that $(u'+u'')/2$ tends to $u$.
Passing to a subsequence, we may assume that $\frac{u'-u''}{\|u'-u''\|} \to \tilde u$, with
$\tilde u \in S^{n-1}$, $\langle \tilde u, u \rangle=0$.
Let $\tilde A$ be the
unique point in  the supporting hyperplane going through $B$
such that $A\tilde A$ is parallel to $u$; then $\langle AB, \tilde u\rangle = \langle \tilde AB, \tilde u\rangle$.
So, observing that $\rho(u',u'') \sim \|u'-u''\|$ as $\|u'-u''\|\to0$,
\begin{multline*}
\lim_{u',u''\to u} \Delta w_K(u',u'') =
\lim_{u',u''\to u}\left| \left\langle AB, \frac{u'-u''}{\|u'-u''\|}\right\rangle\right|
\\
= \langle \tilde AB, \tilde u\rangle \le |\tilde AB| = \sqrt{|AB|^2-|A\tilde A|^2} =
\sqrt{|AB|^2-w^2_K(u)}.
\end{multline*}
We can obtain the case of equality above if we choose $u'=u$ and $u''$ so that $\tilde u$
is parallel to $(\tilde A B)$, which shows that the limes superior is equal to the righthand
side.
\end{proof}

Any two vectors $u,v$ in the same connected component of
$\mathcal W_K$ can be joined
within the component by a geodesic (arc of a great circle). If
the inequality in Theorem \ref{main} (1) was violated, by dichotomy, we could find
arbitrarily small arcs $\arcs{u'u''}$
within the geodesic segment where the inequality
would be violated, but since $\rho(u',u'')  \sim \|u'-u''\|$ as both quantities tend
to $0$, this would violate Lemma \ref{limw}.

If $u$ or $v$ lies on an exceptional hyperplane, and the other in
an adjacent component, we obtain the same result by continuity. If
$u$ and $v$ belong to different components, the shorter of the two
arcs on the great circle containing them will intersect the
exceptional hyperplanes in a finite number of points, and we
conclude again by continuity.  If $u$ and $v$ are on distinct
exceptional hyperplanes, the same method works.  Finally, if $u$
and $v$ lie on the same exceptional hyperplane, we may perturb one
of them slightly, apply the previous estimate, and conclude again
by continuity.

\smallskip

{\it Proof of Part (2).} As in Part (1), we exclude a finite set
of exceptional vector hyperplanes from the unit sphere. This time
those are the hyperplanes $H_D$ generated by a set $D$ of vertices
of $K$ with $\# D=n$. When $u \in \mathcal R_K := S^{n-1}
\setminus \bigcup_{D\subset V(K), \# D=n} H_D $, we can choose a
diametral chord for $u$ passing through a vertex $A$ of $K$; its
other extremity $B$ will have to be in the interior of an
$(n-1)$-dimensional face of $K$. Denote by $\tilde d_K(u)$ the
distance between the parallel supporting hyperplanes of $K$, $H_1$
and $H_2$, that contain $A$ and $B$ respectively.

\begin{lemma}
\label{limd}
For any $u\in \mathcal R_K$,
\[
d_K'(u):= \limsup_{u',u''\to u} \Delta d_K(u',u'')
=
\frac{d_K(u)}{\tilde d_K(u)}\sqrt{d^2_K(u)-\tilde d^2_K(u)}\le N_K.
\]
\end{lemma}

\begin{proof}
There is a neighborhood of $u$, $\mathcal N\subset  \mathcal R_K$,
such that for any $t\in \mathcal N$, a diametral chord for $t$
also passes through $A$ and the face containing $B$. Assume $u',
u''\in \mathcal N$.

Let $H$ be the \emph{strip} of $\mathbb R^n$ determined by $H_1$
and $H_2$ (that is, the intersection between two distinct half
spaces determined by $H_1$ or $H_2$ that contains $K$). Consider
the points $B' \in H_2$ (resp. $B''\in H_2$) such that $AB'$(resp.
$AB''$) is parallel to $u'$ (resp. $u''$), then $d_K(u')=|AB'|$,
$d_K(u'')=|AB''|$. Assume as we may that $|AB''|\ge |AB'|$. In
$\triangle AB'B''$, let $h$ be the distance from $A$ to the line
$(B'B'')$ and  $\alpha, \beta', \beta''$ be the angles at the
vertices $A, B', B''$ respectively. Again we use that
$\rho(u',u'') \sim \|u'-u''\|$. Then by the law of sines and
$\alpha+ \beta'+ \beta''=\pi$,
\begin{multline*}
\frac{|AB''|- |AB'|}{\|u'-u''\|} = \frac{|AB'|(\sin \beta' - \sin \beta'')}
{ (\sin \beta'')( 2 \sin \alpha/2) } = |AB'| \frac{\sin \frac{\beta'-\beta''}
2  \cos \frac{\beta'+\beta''}2}{ \sin \beta''  \sin \alpha/2 }
\\
= |AB'| \frac{\cos (\beta''+  \frac{\alpha}2)}{ \sin \beta'' } \le |AB'| \cot \beta'' =
|AB'| \sqrt{\frac{|AB''|^2}{h^2} -1 } \le  |AB'| \sqrt{\frac{|AB''|^2}{\tilde d_K(u)^2} -1 },
\end{multline*}
since $h\ge \tilde d_K(u)$. Passing to the limit, we obtain that $N_K$
is an upper bound for the limes superior we are considering.

Observe that as $u', u''\to u$, then $\alpha \to 0$, $|AB'|,|AB''|\to d_K(A)$,
but when the unit vectors parallel to $(B'B'')$ admit a limit $v$,
$h\to \mbox{dist}(A, L)$
where $L$ is the line through $B$ parallel to $v$.
If we choose $A' \in H_B$ such that $|AA'|= \tilde d_K(u)$, and $u',u''$
so that $B', B''$ tend to $B$ along the line $(BA')$, then
$h\to  \tilde d_K(u)$, and $$\lim_{u',u''\to u}\Delta d_K(u',u'')=
\frac{d_K(u)}{\tilde d_K(u)}\sqrt{d^2_K(u)-\tilde d^2_K(u)}.$$
\end{proof}

We finish the proof of Part (2) in the same way as that of Part
(1), substituting Lemma \ref{limd} for Lemma \ref{limw}.
\end{proof}

\noindent{\bf Remark.} Notice that to obtain Theorem \ref{main}, it is enough to
use the upper bound for the derivative in Lemmas \ref{limd} and \ref{limw}.
\smallskip

\noindent{\bf Open question.} Can one replace $\rho(u,v)$ by $||u-
v||$ in Theorem \ref{main}?
\smallskip

The constants we have obtained are the
best possible in the following sense.

\begin{thm}
Let $\delta\ge \omega >0$, and $\mathcal K_{\delta,\omega}:= \{
K\subset \R^n \mbox{ convex body such that } \delta_K=\delta
\omega_K=\omega\}$. Then
\[
\max_{K \in \mathcal K_{\delta,\omega} } \sup_{u\neq \pm v \in S^{n-1} }
\Delta w_K(u,v)= \sqrt{\d^2-\om^2} ,
\quad
\max_{K \in \mathcal K_{\delta,\omega} } \sup_{u\neq \pm v \in S^{n-1} }
\Delta d_K(u,v)= \frac\delta\omega \sqrt{\d^2-\om^2} .
\]
\end{thm}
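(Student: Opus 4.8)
The upper bounds are immediate from Theorem \ref{main}: every $K\in\mathcal K_{\delta,\omega}$ has $M_K=\sqrt{\delta^2-\omega^2}$ and $N_K=\frac\delta\omega\sqrt{\delta^2-\omega^2}$, so $\sup_{u\neq\pm v}\Delta w_K(u,v)\le\sqrt{\delta^2-\omega^2}$ and $\sup_{u\neq\pm v}\Delta d_K(u,v)\le\frac\delta\omega\sqrt{\delta^2-\omega^2}$, whence the two maxima over the class are $\le$ the claimed values. It therefore suffices to exhibit a single body in $\mathcal K_{\delta,\omega}$ attaining each value, and the plan is to check that one explicit body attains both (so that the maxima are in fact attained).

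I would take $K_0$ to be the intersection of the slab $\{x\in\R^n:0\le\langle x,e_n\rangle\le\omega\}$ with the closed ball of radius $\delta/2$ centred at $\frac\omega2 e_n$, where $e_1,\dots,e_n$ is the standard basis (for $\delta=\omega$ this is a ball and both constants vanish, so assume $\delta>\omega$). First I would verify $K_0\in\mathcal K_{\delta,\omega}$: since $K_0$ lies in a ball of diameter $\delta$ and contains the ``rim-to-rim'' chord joining $A:=\frac12\sqrt{\delta^2-\omega^2}\,e_1$ to $-\frac12\sqrt{\delta^2-\omega^2}\,e_1+\omega e_n$, whose length is exactly $\delta$, we get $\d_{K_0}=\delta$; and a direct support-function computation gives $w_{K_0}(u)=\sqrt{\delta^2-\omega^2}\,\sin\psi+\omega\cos\psi$ when $\cos\psi=|\langle u,e_n\rangle|\ge\omega/\delta$ and $w_{K_0}(u)=\delta$ otherwise, so that $\om_{K_0}=\min w_{K_0}=\omega$, attained at $u=e_n$.

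For the width bound the same formula shows $w_{K_0}(u)-\omega\sim\sqrt{\delta^2-\omega^2}\,\psi$ as $\psi=\rho(u,e_n)\to0^+$, hence $\Delta w_{K_0}(e_n,u)\to\sqrt{\delta^2-\omega^2}$; geometrically this is exactly the equality case of Lemma \ref{limw}, the supporting calipers touching $K_0$ at two rim points that tend to the ends of the chord of length $\delta=\d_{K_0}$ slanted across the slab of width $\omega=w_{K_0}(e_n)$. For the diameter bound I would keep the vertex-like endpoint $A$ of that chord fixed and let the second endpoint slide along the face $\{x_n=\omega\}$; then the two slab faces are parallel supporting hyperplanes through the endpoints at distance $\tilde d_{K_0}=\omega$, while the chord length tends to $d_{K_0}=\delta$. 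This is precisely the equality configuration of Lemma \ref{limd}, and it gives $\Delta d_{K_0}\to\frac\delta\omega\sqrt{\delta^2-\omega^2}$.

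The main obstacle is the bookkeeping confirming that we sit exactly in the equality cases of Lemmas \ref{limw} and \ref{limd}: that $\d_{K_0}$ and $\om_{K_0}$ are realized only in the stated configurations (no longer chord, no thinner direction), and that for directions near $e_n$ (resp. near the chord direction) the width calipers touch single rim points (resp. the longest chord keeps one endpoint pinned at $A$ and slides the other along $\{x_n=\omega\}$), so that the computed one-sided slopes in the metric $\rho$ genuinely equal $\sqrt{\delta^2-\omega^2}$ and $\frac\delta\omega\sqrt{\delta^2-\omega^2}$. Since Lemmas \ref{limw}--\ref{limd} were stated for polytopes, I would either carry out these estimates directly on the body of revolution $K_0$, where all quantities are explicit, or approximate $K_0$ by inscribed polytopes and pass to the limit, using the monotone convergence of $w$, $d$, $\d$, $\om$ already exploited in the proof of Theorem \ref{main}.
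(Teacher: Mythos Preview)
Your proposal is correct and uses essentially the same construction as the paper: the body $K_0$ you describe is, up to a rotation and the normalization $\delta=2$, exactly the paper's example $\overline B(0,1)\cap\{-\omega/2\le x_1\le\omega/2\}$, and your width formula $w_{K_0}(u)=\sqrt{\delta^2-\omega^2}\sin\psi+\omega\cos\psi$ is the paper's $w_K(u_\theta)=2\cos(\alpha-\theta)$ after the change of variables $\omega=\delta\cos\alpha$, $\psi=\alpha-\theta$.

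The only real difference is presentational. The paper reduces to $n=2$ by an explicit projection argument and then computes $w_K$ and $d_K$ as elementary functions of the angle, reading off the one-sided derivatives directly (so Lemmas~\ref{limw}--\ref{limd} are never invoked for the lower bound). You instead stay in $\R^n$ and appeal to the equality cases of those lemmas, which forces you to confront the fact that $K_0$ is not a polytope and that, for $n\ge3$, the ``corner'' $A$ lies on an $(n-2)$-dimensional rim rather than being an isolated vertex. Your suggested remedy---carry out the estimate directly, since $d_{K_0}(u)=\omega/\cos\psi$ for $\psi\le\arccos(\omega/\delta)$---is exactly what the paper does, and once you do that the polytope approximation becomes unnecessary.
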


\begin{proof}
That the maximum in each equality is bounded above by the righthand side follows
immediately from Theorem \ref{main}.

To prove the converse inequality, consider the following example.
By scaling, it is enough to assume $\delta=2$. For $\omega\in
(0,2]$, let $K:= \overline B(0,1) \cap \{-\omega/2 \le x_1 \le
\omega/2\}$. Clearly $\omega_K=2$. Since $\pi_{\R^2\times\{0\}}
(K)\subset  K \cap (\R^2\times\{0\})$, and since the tangent space
to $\partial B(0,1)$ at any point $u_\theta:= (\cos \theta, \sin
\theta, 0, \dots, 0)$ contains $\{(0,0)\}\times\R^{n-2}$, it is
enough to work on the case $n=2$.

Let $\omega=2\cos \alpha$, $\alpha \in [0, \pi/2)$. Then it is
easy to see that for $\pi/2 \ge \theta\ge \alpha$,
$w_K(u_\theta)=2$, and for $0\le \theta\le \alpha$,
$w_K(u_\theta)=2 \cos (\alpha - \theta)$. Therefore $w_K= 2\cos
\alpha=\omega$, and $w'_K(u_0)= 2 \sin \alpha =
\sqrt{4-\omega^2}$, which proves the first part of the theorem.

On the other hand, for $\pi/2 \ge \theta\ge \alpha$, $d_K(u_\theta)=2$,
and for $0\le \theta\le \alpha$, $d_K(u_\theta)=\frac{\omega}{\cos \theta}$.
So (with analogous notation) $d'_K(u_\alpha)= \omega\frac{\sin \alpha}{\cos^2 \alpha}=
\frac2\omega \sqrt{4-\omega^2}$.
\end{proof}

\section{Dual approach to $\om_K$}
\label{pole}

One may define a function $e_K$
in a dual way to $d_K,$ replacing vectors by points. More precisely, for any point $O$
in $\R^n$ set $e_K(O)$ to be the diameter of $K$ w.r.t.~$O,$ i.e.~the maximal length of
a chord of $K$ cut by a line through $O.$ It is clear that
$\d_K=\ds\max_{O\in \R^n} e_K(O)=\ds\limsup_{O\to\infty} e_K(O).$

\begin{prop}
The function $e_K$ is  upper semicontinuous  on $\R^n$ and its
restrictions to $K^\circ$ and $\R^n\setminus K$ are continuous.
\end{prop}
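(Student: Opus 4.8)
The plan is to deduce all three assertions from the single structural fact that $e_K$ is everywhere upper semicontinuous (u.s.c.) and is in addition lower semicontinuous (l.s.c.) on $K^\circ$ and on $\R^n\setminus K$. For a direction $u\in S^{n-1}$ and a point $O$, write $\ell(O,u)$ for the length of the chord $(O+\R u)\cap K$ (with $\ell(O,u)=0$ if the line misses $K$), so that $e_K(O)=\max_{u\in S^{n-1}}\ell(O,u)$, the maximum being attained by compactness of $S^{n-1}$. It will also be convenient to use the equivalent description $e_K(O)=\max\{|PQ|:P,Q\in K,\ O\in\mathrm{aff}(P,Q)\}$, i.e.\ the maximum of the continuous function $|PQ|$ over the closed (hence compact) set of pairs colinear with $O$.

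First I would prove joint upper semicontinuity of $\ell$ on $\R^n\times S^{n-1}$: if $(O_k,u_k)\to(O,u)$ and the chord endpoints are $O_k+a_ku_k,\ O_k+b_ku_k\in K$, then $a_k,b_k$ stay bounded (as $K$ is bounded and $u_k$ is a unit vector), and after passing to a subsequence they converge to $a,b$ with $O+au,\ O+bu\in K$ since $K$ is closed; hence $\ell(O,u)\ge b-a\ge\limsup\ell(O_k,u_k)$. Upper semicontinuity of $e_K$ on all of $\R^n$ then follows from the standard fact that the maximum over a compact parameter set of a jointly u.s.c.\ function is u.s.c.: if $O_k\to O$, pick directions $u_k$ realizing $e_K(O_k)$, extract $u_k\to u$, and apply joint u.s.c.

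For the lower bounds I would use an ``inward pushing'' argument governed by one dichotomy. Fix $O$ and a maximizing chord $[A,B]$ with $A,B\in\partial K$, and ask whether $(A,B)$ meets $K^\circ$. If it does, then in fact $(A,B)\subset K^\circ$, because for any interior point $P$ of the chord the half-open segments $[P,A)$ and $[P,B)$ lie in $K^\circ$; choosing $A_\eps,B_\eps\in(A,B)\cap K^\circ$ within $\eps$ of $A,B$, the line through any $O'$ near $O$ in the fixed direction $\overrightarrow{AB}$ passes within $K^\circ$ near $A_\eps$ and $B_\eps$ and so cuts a chord of length $\ge|AB|-2\eps-o(1)$, giving $\liminf_{O'\to O}e_K(O')\ge|AB|=e_K(O)$. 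When $O\in K^\circ$ the maximizing chord automatically has $O\in(A,B)\subset K^\circ$, so this case is always transversal and continuity on $K^\circ$ is complete; the same transversal argument also disposes of every exterior $O$ whose maximizing chord meets $K^\circ$.

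The main obstacle is the remaining exterior case, where every maximizing chord $[A,B]$ lies entirely in $\partial K$, i.e.\ in a face $F=K\cap\Pi$ of a supporting hyperplane $\Pi$; this genuinely occurs, e.g.\ for a triangle whose longest side is its diameter with $O$ on the extension of that side. Here the fixed-direction approximation fails, because a line through a perturbed $O'$ in the direction $\overrightarrow{AB}$ may slide off $\Pi$ and miss $F$. I would instead pivot the line about an endpoint: the lines $O'A$ and $O'B$ both tend to $\mathrm{aff}(A,B)$ as $O'\to O$, so their far intersection points with $K$ tend to $B$, resp.\ $A$, and the associated chords have length tending to $|AB|$ \emph{provided} the pivoted line actually penetrates $K$ at the chosen endpoint. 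Using the tangent cones $T_A,T_B$ of $K$ at $A,B$, which contain $\overrightarrow{AB}$, resp.\ $\overrightarrow{BA}$, one must show that for every $O'$ near $O$ at least one of the two pivoted directions remains in the corresponding tangent cone, so that at least one chord has length $\ge|AB|-o(1)$, yielding l.s.c. Verifying that one of the two pivots always succeeds—controlling the perturbed direction against the tangent cones at the two endpoints—is the delicate step on which the exterior case hinges.
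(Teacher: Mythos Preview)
Your upper-semicontinuity argument and the interior case coincide with the paper's. The gap is exactly where you flag it: the exterior case when every maximizing chord lies in $\partial K$. You propose pivoting through an endpoint and appealing to the tangent cones $T_A,T_B$, but you do not prove that at least one of the pivoted lines actually meets $K$ in a long chord; you only say this ``is the delicate step on which the exterior case hinges.'' That is not a proof, and the verification is not routine---for instance, in $\R^3$ one of the two pivoted lines can meet $K$ only at the pivot vertex, so one really would have to establish the dichotomy and then control how far the successful line travels inside $K$ before exiting. As written, the proposal is incomplete.

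The paper's proof avoids this difficulty altogether by a one-line device that makes your transversal argument apply universally. Rather than pivoting at a boundary endpoint, it replaces the maximizing chord $L_O\cap K$ by a nearby segment lying in $K^\circ$: pick any interior point $P_0$ and consider $\lambda(L_O\cap K)+(1-\lambda)P_0$ for $\lambda<1$ close to $1$; this segment is contained in $K^\circ$, has length $\ge e_K(O)-\eps$, and therefore possesses a $\delta$-tubular neighborhood $U\subset K$. Because $O\notin K$ has positive distance from $K$, a small perturbation $O'\to O$ changes the direction of any line from $O'$ to a point of $U$ only slightly, so one can aim a line through $O'$ that traverses $U$ with $l(L'\cap U)\ge e_K(O)-\eps$. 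Thus the paper handles the boundary-chord case by first pushing the chord into the interior, collapsing it to the transversal situation you already treated; no tangent-cone analysis is needed.
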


\begin{proof}
Let $l(\Delta)$ denote the length of a segment $\Delta.$ Suppose
to get a contradiction that upper semicontinuity fails at a point
$O$, and let $L$ be a line realizing the maximum in $e_K$. Then
there are $\eps>0$, a sequence of points $O_k$ and unit vectors
$u_k$ such that if we write $L_k:= O_k + \R u_k$, then $l(L_k \cap
K) \ge l(L \cap K)+\eps$.  Passing to a subsequence, we can assume
that $L_k \cap K$ converges in the Hausdorff sense to a line
segment $S$ contained in $L'\cap K$ for some line $L'$ with $O\in
L'$. Then $l(L' \cap K) \ge l(L \cap K)+\eps$, which contradicts
the maximality of $L$.

If $O\notin K$, choose a line $L_O$ such that the maximum chord
length is attained.  Since $K$ has non empty interior and is
convex, we can find a line segment arbitrarily close to $L_O\cap
K$, such that a $\delta$-neighborhood of it, $U$, is contained in
$K$. Taking $O'$ close enough to $O$, which has a positive
distance from $K$, we can obtain a line $L'$ so that $l(L'\cap
U)\ge l(L_O\cap K)-\eps$.

If $O\in K^\circ$, a neighborhood of $L_O\cap K$ is contained in
$K$, and for $O'$ close enough to $O$ and $L'$ passing through
$O'$ and parallel to $L_O$, $l(L'\cap K)\ge \eps$.
\end{proof}

Recall that a point $O\in K$ is said to be \emph{extreme} if for
any line segment $S\subset K$ such that $O\in S$, then $O$ is an
endpoint of $S$.

\begin{prop} The function $e_K$ is continuous
at $O\in\partial K$ if and only if there exist a chord $S$ of maximal length so that
$O$ is an endpoint of $S$; equivalently, if there exists $O'\in \partial K$ such that
$e_K(O)=|OO'|$.

In the case of continuity, $O'$ must be an extreme point of $K$.
\end{prop}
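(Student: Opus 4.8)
The plan is to route everything through the auxiliary \emph{farthest-point function} $h_K(O):=\max_{P\in K}|OP|$, the distance from $O$ to the most distant point of $K$. This is defined on all of $\R^n$, is $1$-Lipschitz (hence continuous), and one always has $e_K(O)\ge h_K(O)$, since the line through $O$ and a farthest point $O'$ cuts a chord of length at least $|OO'|=h_K(O)$. My first step is to record that \emph{both} conditions in the statement are equivalent to the single equality $e_K(O)=h_K(O)$. Indeed, if equality holds, the chord above has length exactly $e_K(O)=|OO'|$, so it coincides with $[O,O']$ and $O$ is an endpoint; conversely, a maximal chord with endpoint $O$ has its far end $O'\in\partial K$ with $e_K(O)=|OO'|\le h_K(O)\le e_K(O)$, forcing equality. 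This simultaneously shows the two displayed conditions are interchangeable and that, when they hold, $O'$ is a farthest point of $K$ from $O$.

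Next I would prove continuity under $e_K(O)=h_K(O)=:c$. The preceding proposition already gives upper semicontinuity, so only $\liminf_{O_k\to O}e_K(O_k)\ge c$ is needed. The tool is lower semicontinuity of the \emph{open} chord-length functional $\ell^\circ(L):=\mathrm{length}(L\cap K^\circ)$ as the line $L$ varies, which is easy to check because $K^\circ$ is open. The catch is that when $O_k$ approaches $O$ from outside $K$, or when the maximal chord lies inside $\partial K$, one cannot merely translate the optimal chord. My remedy is to pivot through an interior point: fixing $\eta>0$, choose $z\in K^\circ$ and set $O'_s=(1-s)O'+sz\in K^\circ$ with $|OO'_s|\ge c-\eta$ for small $s$. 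Since $O'_s\in K^\circ$, the chord $\mathrm{line}(OO'_s)\cap K$ has relative interior inside $K^\circ$ and length $\ge|OO'_s|\ge c-\eta$; as $O_k\to O$ the lines $\mathrm{line}(O_kO'_s)$ converge to $\mathrm{line}(OO'_s)$, so lower semicontinuity of $\ell^\circ$ gives $\liminf_k e_K(O_k)\ge c-\eta$, and $\eta\to0$ closes this direction.

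For the discontinuity direction I assume $e_K(O)>h_K(O)$ and test against a single good sequence. Let $\nu$ be an outward unit normal of a supporting hyperplane at $O$, so $K\subseteq\{x:\langle x-O,\nu\rangle\le0\}$, and set $O_k=O+t_k\nu$ with $t_k\downarrow0$. Each $O_k$ is strictly outside $K$, hence lies outside every chord it determines; if $F$ is the endpoint of such a chord farthest from $O_k$, the chord has length at most $|O_kF|\le t_k+|OF|\le t_k+h_K(O)$ because $F\in K$. Thus $e_K(O_k)\le t_k+h_K(O)\to h_K(O)<e_K(O)$, so $e_K$ fails to be lower semicontinuous at $O$. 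Together with the previous paragraph this establishes the equivalence.

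Finally, in the continuous case $O'$ is a farthest point of $K$ from $O$, and farthest points are automatically extreme: if $O'=\tfrac12(P+Q)$ with $P,Q\in K$, the parallelogram identity $|OP|^2+|OQ|^2=2|OO'|^2+\tfrac12|PQ|^2$ forces $\max(|OP|,|OQ|)>|OO'|$ unless $P=Q$, contradicting maximality of $|OO'|$. I expect the genuine obstacle to be the continuity direction for exterior approaches and for maximal chords lying in $\partial K$, where neither translating nor the naive bound $e_K(O_k)\ge h_K(O_k)$ (which needs $O_k\in K$) is available; the pivot-through-an-interior-point device coupled with lower semicontinuity of the open chord length is precisely what overcomes this.
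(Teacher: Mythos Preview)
Your proof is correct and proceeds by a genuinely different route from the paper's. The key device is recasting the hypothesis as $e_K(O)=h_K(O)$, where $h_K(O)=\max_{P\in K}|OP|$ is the ($1$-Lipschitz) farthest-point distance; the paper never introduces this function. For the continuity direction both arguments push the far endpoint $O'$ slightly into $K^\circ$ so that nearby lines still cut long chords---the paper builds the cone over a small ball $B(A_1,\eps_1)\subset K$ near $O'$, while you pivot $O'$ along a segment toward a fixed interior point and appeal to lower semicontinuity of the open chord length; these are variants of the same geometric idea. The real economy is in the discontinuity direction: the paper sets up a compactness argument with a case split according to distance from the endpoints of all maximal chords through $O$, whereas your single observation---for $O_k\notin K$ every chord through $O_k$ has length at most $h_K(O_k)\le h_K(O)+t_k$---finishes immediately once one knows $h_K(O)<e_K(O)$. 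Your extremeness argument via the parallelogram identity (farthest points of a convex body are always extreme) is likewise shorter than the paper's direct displacement along a segment through $O'$, though both yield the same conclusion.
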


It follows in particular that $e_K$ is continuous at any extreme point of $K$.

\begin{proof}
First, suppose that there exist a chord $S$ of maximal length so
that $O$ is an endpoint of $[OO']:=S$. Then given $\eps>0$, there
exists a ball $B(A_1,\eps_1) \subset K\cap B(O',\eps/2)$, with
$\eps_1>0$. So the convex hull $C$ of $\{O\} \cup B(A_1,\eps_1)$
is contained in $K$. For $\eps_2>0$ small enough and
$|OP|<\eps_2$, $l((A_1P)\cap C) \ge |OA_1|-\eps/2 \ge
|OO'|-\eps$, q.e.d.

If $O'\in S' \subset \partial K$ was not extreme, looking at a point $A\in S'$
on one or the other side of $O'$, we could obtain $|OA|>|OO'|=e_K(O)$, thus violating
the maximality assumption.

Conversely, assume that for any chord $S:=[O'O'']$  containing $O$
such that $e_K(O)= l(S)= |OO'|+|OO''|$ one has $O\neq O'$, $O\neq
O''$. Without loss of generality assume that $|OO'|\le |OO''|$. We
set $r:=\frac12 \min |OO'| \le \frac14 e_K(O)$, where the minimum
is taken over all chords $S$ as above. Then by compactness there
exists a $\delta
>0$ such that
\[
\sup \{ |OA| : A \in K, |AO'|\ge r, |AO''|\ge r, \forall O', O''\} \le e_K(O)-\delta.
\]

Let $H$ be a supporting hyperplane for $K$ at $O$, and let $H^+$ be the connected
component of $\R^n\setminus H$ that does not meet $K$. Let
$P \in H^+$, $A\in K$.
For $|OP|\le \eps$, if $A$ satisfies $|AO'|\ge r, |AO''|\ge r, \forall O', O''$,
endpoints of chords of maximal length containing $O$,
then $l((AP)\cap K) \le |AP| \le |OA|+\eps \le e_K(O)-\delta +\eps \le e_K(O)-\delta/2$
for $\eps$ small enough.
If on the other hand there exists $O'$ or $O''$ such that $|AO'|, |AO''| \le r$, then
(taking $O'$ and $O''$ to be both endpoints of the same chord)
$$
l((AP)\cap K) \le |AP| \le |OA|+\eps \le |OO''| + \frac12 |OO'| +\eps
\le e_K(O) - r +\eps \le e_K(O) - r/2,
$$
for $\eps$ small enough. So $\liminf_{P\to O, P \in H^+} e_K(P) < e_K(O)$.
\end{proof}
\begin{exam} Let $K=\triangle ABC$. Then the set of points of continuity
of $e_K$ on $\partial K$ reduces to $\{A,B,C\}$ if and only if $\triangle ABC$
is equilateral.
\end{exam}

Set now
$\eps_K=\inf e_K.$ Observe that this infimum may fail to be
attained. For example, if $K$ is a parallelogram with heights
$h_1\le h_2,$ then $h_1=\om_K=\eps_K<e_K(O)$ for any point $O.$

On the other hand, we have the following

\begin{exam} Let $K=\triangle ABC,$ with heights $h_a\le h_b, h_c.$ Then $\eps_K=\om_K=h_a.$
Moreover, $e_K(O)=\eps_K$ for any point $O$ on the line $(AA_1)\perp (BC)$ ($A_1\in[BC]$)
such that $\angle ABO\ge 90^\circ$ and $\angle ACO\ge 90^\circ.$
\end{exam}

\begin{proof} Note that for any point $O'$ there is a vertex $D$ of $K$ such that the
line $(O'D)$ meets its opposite side at a point $D'$. Then
$e_K(O')\ge|DD'|\ge h_d\ge h_a=\om_K.$

It remains to prove that $e_K(O)=h_a$ for $O$ as above. It is
enough to show that if $E\in]AB[,$ $E_1\in]BA_1[,$ and
$O\in(EE_1),$ then $|EE_1|<|AA_1|.$ The last follows by
$|OE_1|>|OA_1|$ and $|OE|<|OA|$ (since $\angle AEO>\angle ABO\ge
90^\circ$).
\end{proof}

The observations above suggest the following

\begin{prop} For any convex body $K$ one has that
$\om_K=\eps_k=\tilde\eps_K:=\ds\liminf_{O\to\infty}e_K(O).$
\end{prop}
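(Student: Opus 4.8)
The plan is to establish the chain $\om_K\le\eps_K\le\tilde\eps_K\le\om_K$, which forces all three quantities to coincide. The inequality $\eps_K\le\tilde\eps_K$ is immediate, since $\eps_K=\inf_{\R^n}e_K$ is an infimum over all points whereas $\tilde\eps_K$ is a liminf over the cofinal family of points tending to infinity. Thus everything reduces to the two bounds $\tilde\eps_K\le\om_K$ and $\om_K\le e_K(O)$ for every $O$ (the latter giving $\om_K\le\eps_K$).

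For $\tilde\eps_K\le\om_K$, I would fix a direction $u$ realizing $\om_K=\min d_K=d_K(u)$ and let $O=-tu$ with $t\to+\infty$. As $t$ grows, every line through $O$ meeting $K$ has direction in a cone $C_t$ shrinking to $\{u\}$, so $e_K(-tu)\le\max_{v\in C_t}d_K(v)$; by continuity of $d_K$ this gives $\limsup_{t\to\infty}e_K(-tu)\le d_K(u)=\om_K$, whence $\tilde\eps_K\le\om_K$. (In fact $e_K(-tv)\to d_K(v)$ for every $v$, but only the single direction $u$ is needed here.)

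The heart of the proof is the bound $e_K(O)\ge\om_K$ for \emph{every} $O\in\R^n$. The key observation is that a \emph{diametral} chord in direction $v$ has length $d_K(v)\ge\min d_K=\om_K$; so it suffices to produce, through an arbitrary $O$, a line meeting $K$ in a diametral chord, whence $e_K(O)$ is at least the length of that chord. This is the statement that $O$ lies on an \emph{affine diameter} line of $K$. For a smooth, strictly convex planar body it follows from a parity argument: writing $Q^\pm(\theta)$ for the boundary points carrying outer normals $\pm(\cos\theta,\sin\theta)$ (which have parallel supporting lines, so $[Q^+(\theta),Q^-(\theta)]$ is diametral), the signed distance $\delta(\theta)$ from $O$ to the line $\ell_\theta$ through them satisfies $\delta(\theta+\pi)=-\delta(\theta)$; by the intermediate value theorem some $\theta^*$ gives $\delta(\theta^*)=0$, i.e. $O\in\ell_{\theta^*}$, and the chord cut by $\ell_{\theta^*}$ is diametral. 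Crucially this makes no use of whether $O$ lies inside, on, or outside $K$: it only needs $O$ on the line $\ell_{\theta^*}$, and that line always cuts the full diametral chord. The general planar case is recovered by approximating $K$ from inside by smooth strictly convex bodies $K_j\uparrow K$, using $e_{K_j}(O)\le e_K(O)$ together with $e_{K_j}(O)\ge\om_{K_j}$ and $\om_{K_j}\to\om_K$. For $K\subset\R^n$ with $n\ge3$ I would invoke the classical affine-diameter theorem (through every interior point passes an affine diameter; see \cite{Sol}) for interior $O$, pass to $\partial K$ by the upper semicontinuity of $e_K$ established above, and treat exterior $O$ by the analogous observation that it too lies on some affine-diameter line.

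The main obstacle is precisely this last step: establishing, uniformly for all $O\in\R^n$ and in all dimensions, that $O$ lies on the line of some diametral chord. The planar parity argument is clean but dimension-two specific; in higher dimensions the corresponding statement is a degree/Borsuk--Ulam type result, and the exterior points require the extra remark that the cutting line still realizes the full diametral chord even when $O$ is not between its endpoints. Once $e_K\ge\om_K$ is secured everywhere, the three quantities are forced to be equal.
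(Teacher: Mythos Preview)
Your argument is correct and follows essentially the same two-step scheme as the paper: (i) show $\tilde\eps_K\le\om_K$ by sending $O$ to infinity so that all chords through $O$ have direction near a thickness-realizing $u$, and (ii) show $e_K(O)\ge\om_K$ for every $O$ by placing $O$ on the line of some diametral (affine-diameter) chord.

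The only real difference is in step (ii). The paper dispatches it in one line by citing \cite[6.6]{Sol}: \emph{for any point $O\in\R^n$ there is a diametral chord $[AB]$ with $O\in(AB)$}. This covers interior, boundary, and exterior points in all dimensions at once, so the ``main obstacle'' you flag is in fact already a known theorem in the literature the paper invokes. Your planar IVT argument and approximation are a correct self-contained proof in dimension two, but the higher-dimensional case you leave open is precisely what \cite{Sol} supplies; there is no need for the separate boundary/exterior treatment via upper semicontinuity. For step (i) the paper sends $O\to\infty$ along the \emph{line} of a minimal diametral chord $[AB]$ and uses $e_K(O)\to|AB|$, which is the same mechanism as your cone-of-directions bound.
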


\begin{proof} Note that if $[AB]$ is a diametral chord, then
$\ds\lim_{AB\ni O\to\infty}e_K(O)=|AB|$ and hence $\eps_K\le\tilde\eps_K\le\om_K.$
On the other hand, for any point $O$ there is a diametral chord $[AB]$ such that
$O\in AB$ (see e.g.~\cite[6.6]{Sol}) and thus $\eps_K\ge\min d_K=\om_K.$
\end{proof}

\section{Refinement of the Lipschitz constants}
\label{refine}

\subsection{Variations of $w_K$.}
\label{varw}

Recall that in order to obtain Lemma \ref{limw}, we needed to assume $u\in \mathcal W_K$.
For $K$ a convex polytope, we will establish an analogous result for all $u\in S^{n-1}$.
For any convex body $K$, we also derive a more precise upper
bound for $w_K'(u)$ which depends on $u$.

As before, for a convex body $K$, denote by $H_1(u)$ and $H_2(u)$ the two supporting hyperplanes
of $K$ orthogonal to $u$, and set
\[
s_K(u):=\max\{|A'B'|: A'\in K\cap H_1(u), B'\in K\cap H_2(u)\},\quad
p_K(u):= \sqrt{ s^2_K(u)-w^2_K(u)}.
\]

\begin{lemma}
\label{pusc} Let $K_j$ be a decreasing sequence of convex bodies
with limit $K$ and let $u_j \in S^{n-1}$ be a sequence with
$\lim_j u_j = u$. Then $\limsup_j s_{K_j} (u_j) \le s_K(u)$.

As a consequence, the analogous inequality holds for $p_K$,
and both $s_K$ and $p_K$ are upper semicontinuous on $S^{n-1}$.
\end{lemma}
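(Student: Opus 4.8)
The plan is to prove the main inequality $\limsup_j s_{K_j}(u_j)\le s_K(u)$ by a selection argument and then read off the rest. First I pass to a subsequence of indices along which $s_{K_j}(u_j)$ converges to $\limsup_j s_{K_j}(u_j)$, and for each such $j$ pick points $A_j\in K_j\cap H_1(u_j)$ and $B_j\in K_j\cap H_2(u_j)$ with $|A_jB_j|=s_{K_j}(u_j)$, where now $H_1,H_2$ denote the supporting hyperplanes of $K_j$ (the maximum is attained since these two faces are compact and nonempty). As all $K_j\subset K_1$ lie in a fixed ball, a further subsequence gives $A_j\to A$ and $B_j\to B$. The claim then reduces to showing $A\in K\cap H_1(u)$ and $B\in K\cap H_2(u)$, since this yields $s_K(u)\ge|AB|=\lim_j|A_jB_j|=\limsup_j s_{K_j}(u_j)$. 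That $A,B\in K$ is immediate: for fixed $m$ and $j\ge m$ we have $A_j\in K_j\subset K_m$, and closedness of $K_m$ gives $A\in K_m$ for every $m$, hence $A\in\bigcap_m K_m=K$; likewise for $B$.

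The heart of the matter, and the step I expect to be the main obstacle, is to place $A$ and $B$ in the correct supporting hyperplanes of the \emph{limit} body. For this I would use the support function $h_K(u):=\max_{x\in K}\langle x,u\rangle$, so that $H_1(u)=\{x:\langle x,u\rangle=h_K(u)\}$ while $H_2(u)$ corresponds to $-u$. The membership $A_j\in H_1(u_j)$ reads $\langle A_j,u_j\rangle=h_{K_j}(u_j)$, whose left side tends to $\langle A,u\rangle$; so everything comes down to the joint convergence $h_{K_j}(u_j)\to h_K(u)$. I establish this in two pieces. With $R:=\max_{x\in K_1}|x|$, every support function is $R$-Lipschitz in its direction, so $|h_{K_j}(u_j)-h_{K_j}(u)|\le R\,|u_j-u|\to0$, which removes the dependence on $u_j$. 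For the fixed direction $u$, the numbers $h_{K_j}(u)$ decrease (as $K_{j+1}\subset K_j$) and stay $\ge h_K(u)$; the very same closed-intersection argument applied to maximizers $x_j\in K_j$ of $\langle\cdot,u\rangle$ forces their decreasing limit to equal $h_K(u)$. Combining the two pieces gives $\langle A,u\rangle=h_K(u)$, i.e. $A\in H_1(u)$, and the identical argument applied to $-u$ gives $B\in H_2(u)$. This proves $\limsup_j s_{K_j}(u_j)\le s_K(u)$.

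The remaining assertions are then formal. The same support-function convergence yields $w_{K_j}(u_j)=h_{K_j}(u_j)+h_{K_j}(-u_j)\to h_K(u)+h_K(-u)=w_K(u)$, a genuine limit. Since any chord joining $H_1(u)$ to $H_2(u)$ projects onto $u$ with length $w_K(u)$, we have $s_K\ge w_K$, so $p_K=\sqrt{s_K^2-w_K^2}$ is well defined, and
\[
\limsup_j p_{K_j}(u_j)^2\le\limsup_j s_{K_j}(u_j)^2-\lim_j w_{K_j}(u_j)^2\le s_K(u)^2-w_K(u)^2=p_K(u)^2 ,
\]
whence $\limsup_j p_{K_j}(u_j)\le p_K(u)$ after taking square roots. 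Finally, applying the whole statement to the constant sequence $K_j\equiv K$ turns both inequalities into $\limsup_{v\to u}s_K(v)\le s_K(u)$ and its analogue for $p_K$, which is exactly upper semicontinuity on $S^{n-1}$.
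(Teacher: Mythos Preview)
Your argument is correct and follows essentially the same subsequence/compactness strategy as the paper's proof: choose realizing points $A_j,B_j$, pass to convergent subsequences, and verify that the limits lie in $K\cap H_1(u)$ and $K\cap H_2(u)$. Where the paper simply asserts this last membership, you justify it via convergence of the support functions $h_{K_j}(u_j)\to h_K(u)$, and you also make explicit the step $w_{K_j}(u_j)\to w_K(u)$ needed for the $p_K$ consequence; these are welcome amplifications but do not change the approach.
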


\begin{proof}
For any $x \in S^{n-1}$,
denote by $H^j_1(x)$ and $H^j_2(x)$ the two supporting hyperplanes of $K_j$ orthogonal to $x$.

We choose $A_j \in K\cap H^j_1(u_j)$, $B_j \in K\cap H^j_2(u_j)$
such that $s_{K_j} (u_j) = |A_j B_j|$.

Since $s_{K_j}$ is a bounded sequence, passing to a subsequence
$K'_j$ we may assume that $s_{K'_j}$ converges to $\limsup_j
s_{K_j}$, and again passing to a subsequence, that $A_j\to A^* \in
H_1(u)\cap K$, $B_j\to B^* \in H_2(u)\cap K$. Then $s_K(u)\ge
|A^*B^*| =\lim_j |A_jB_j| = \limsup_j s_{K_j}$.

To obtain the upper semicontinuity, it is enough to specialize
to the case where $K_j=K$ for all $j$.
\end{proof}

For $K$ any convex body, define
$\hat M_K:=\max_{u\in S^{n-1}}p_K(u)$.
Since $p_K$ is upper semicontinuous,  the maximum in the
definition is indeed attained.

For $K$  a convex polytope, define
$ \tilde M_K:=\sup_{u\in \mathcal W_K}p_K(u)$.

If $K$ is a convex polytope and $u \in \mathcal W_K$, $K\cap H_1(u)$ and $K\cap H_2(u)$
are singletons, and Lemma  \ref{limw} says that $w'(u)=p_K(u)$.

\begin{prop}
\label{nongen}
For $K$ a convex polytope, $ \tilde M_K=\hat M_K$ and
for any $u\in S^{n-1}$, $w_K'(u)= p_K(u) \le \hat M_K$.
\end{prop}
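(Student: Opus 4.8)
The plan is to prove the central identity $w_K'(u)=p_K(u)$ for \emph{every} $u\in S^{n-1}$ first, and then deduce $\tilde M_K=\hat M_K$ from it together with the geodesic argument already used for Theorem \ref{main}. Throughout I would work with the support function $h_K(v)=\max_{x\in K}\langle x,v\rangle$, so that $w_K(v)=h_K(v)+h_K(-v)$. Writing $F_1:=K\cap H_1(u)$ and $F_2:=K\cap H_2(u)$ for the (possibly positive-dimensional) contact faces in directions $u$ and $-u$, the first thing to record is that the normal-fan structure of the polytope $K$ forces, for every $v$ in a fixed neighborhood of $u$, the contact set $\arg\max_K\langle\cdot,v\rangle$ to be a subface of $F_1$: a vertex $A$ lies in it only if $v$ belongs to the closed normal cone $N(A)$, and $A\notin F_1$ means $u\notin N(A)$, so by closedness $v\notin N(A)$ for $v$ near $u$. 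Hence for $u',u''$ close to $u$ one may write $w_K(u')=\langle A'-B',u'\rangle$ and $w_K(u'')=\langle A''-B'',u''\rangle$ with $A',A''$ vertices of $F_1$ and $B',B''$ vertices of $F_2$.

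For the upper bound $w_K'(u)\le p_K(u)$ I would exploit the maximality of the contact vertices (e.g.\ $\langle A'',u''\rangle\ge\langle A',u''\rangle$ and symmetrically), which yields the sandwich $\langle A''-B'',u'-u''\rangle\le w_K(u')-w_K(u'')\le\langle A'-B',u'-u''\rangle$, whence $|w_K(u')-w_K(u'')|\le\max_{A\in F_1,\,B\in F_2}|\langle A-B,u'-u''\rangle|$. Now $\langle u'-u'',(u'+u'')/2\rangle=0$ with $(u'+u'')/2\to u$, so $u'-u''$ is asymptotically orthogonal to $u$; decomposing $A-B$ along $u$ and $u^\perp$ and using $\langle A-B,u\rangle=h_K(u)+h_K(-u)=w_K(u)$ gives $|\langle A-B,u'-u''\rangle|\le\|u'-u''\|\big(o(1)+\sqrt{|A-B|^2-w_K(u)^2}\big)\le\|u'-u''\|\,(o(1)+p_K(u))$. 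Dividing by $\rho(u',u'')\sim\|u'-u''\|$ and taking the limes superior gives $w_K'(u)\le p_K(u)$.

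For the matching lower bound I would pick vertices $A^*\in F_1$, $B^*\in F_2$ realizing $s_K(u)=|A^*B^*|$, let $\tilde u$ be the unit vector in the direction of the projection of $A^*-B^*$ onto $u^\perp$ (the degenerate case of zero projection forcing $p_K(u)=0$), and test along the great circle $u(t)=\cos t\,u+\sin t\,\tilde u$. The one-sided derivative of the support function gives $\tfrac{d}{dt}\big|_{0^+}w_K(u(t))=\max_{A\in F_1}\langle A,\tilde u\rangle-\min_{B\in F_2}\langle B,\tilde u\rangle=\max_{A\in F_1,\,B\in F_2}\langle A-B,\tilde u\rangle$; each summand is at most $\sqrt{|A-B|^2-w_K(u)^2}\le p_K(u)$, while $(A^*,B^*)$ attains exactly $\langle A^*-B^*,\tilde u\rangle=p_K(u)$, so the derivative equals $p_K(u)$. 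Taking $u'=u(t)$, $u''=u$ with $t\downarrow0$ (so $\rho(u',u'')=t$) shows $\Delta w_K(u',u'')\to p_K(u)$, hence $w_K'(u)\ge p_K(u)$. This establishes $w_K'(u)=p_K(u)$ for all $u\in S^{n-1}$, reconciling with Lemma \ref{limw} on $\mathcal W_K$, where $F_1,F_2$ are singletons and $s_K(u)=|AB|$.

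Finally, to obtain $\tilde M_K=\hat M_K$, note that $\tilde M_K=\sup_{\mathcal W_K}p_K\le\max_{S^{n-1}}p_K=\hat M_K$ is immediate. For the reverse I would re-run the geodesic dichotomy from the proof of Theorem \ref{main} with the constant $\tilde M_K$ in place of $M_K$: on $\mathcal W_K$ one has $w_K'(u)=p_K(u)\le\tilde M_K$, so the same argument gives $\Delta w_K(u',u'')\le\tilde M_K$ for all $u',u''$, and passing to the limes superior yields $w_K'(u)\le\tilde M_K$ for every $u$. Since $w_K'=p_K$ everywhere, $\hat M_K=\max_u p_K(u)\le\tilde M_K$, giving equality; the bound $p_K(u)\le\hat M_K$ is just the definition of $\hat M_K$. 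I expect the main obstacle to be the lower bound: one must confirm that the extremal rate $p_K(u)$ is genuinely realized inside the limes superior (validity of the one-sided support-function derivative and of the admissible test pair $u'=u(t),\,u''=u$), together with the neighborhood claim that nearby contact sets remain inside $F_1$ and $F_2$.
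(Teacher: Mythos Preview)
Your argument is correct, and it is organized in the reverse order from the paper's own proof. The paper first establishes $\tilde M_K=\hat M_K$ by a direct geometric construction: given $u\notin\mathcal W_K$ and extremal $A_m,B_m$ realizing $s_K(u)$, it works in the $2$-plane spanned by $u$ and $\overrightarrow{A_mB_m}$ and rotates the supporting hyperplanes slightly so as to produce nearby $u'\in\mathcal W_K$ with $s_K(u')=s_K(u)$; only afterwards does it recover $w_K'(u)=p_K(u)$ at the exceptional directions by combining this density with the upper semicontinuity of $p_K$ and a diagonal argument. You instead prove $w_K'(u)=p_K(u)$ at \emph{every} $u$ first, via the normal-fan sandwich for the upper bound and the one-sided directional derivative $h_K'(u;\tilde u)=h_{F_1}(\tilde u)$ of the support function for the lower bound, and then read off $\hat M_K\le\tilde M_K$ by feeding the bound $w_K'\le\tilde M_K$ on $\mathcal W_K$ through the geodesic/dichotomy argument of Theorem~\ref{main}. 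Your route is the more analytic one and sidesteps the planar rotation; the paper's route has the bonus of explicitly exhibiting generic directions $u'\in\mathcal W_K$ near $u$ with $s_K(u')=s_K(u)$, a density statement slightly stronger than what $\tilde M_K=\hat M_K$ alone asserts.
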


\begin{proof}
To prove that $ \tilde M_K=\hat M_K$ ,
it will be enough to show :
\[
 \forall u\in S^{n-1}\setminus \mathcal W_K,
\limsup_{u'\to u, u'\neq u} p_K(u')= p_K(u).
\]
That the lefthand side is less or equal to $p_K(u)$
follows from Lemma \ref{pusc}.

To obtain the reverse inequality, let $A_m, B_m$ be two points
where the maximum in the definition of $s_K(u)$ is achieved. Let
$\mathcal P$ be the affine plane generated by $A_m, u$, and
$\overrightarrow{A_m B_m} $, $\pi_{\mathcal P}$ the orthogonal
projection to $\mathcal P$, and $\mathcal L$ the
$(n-2)$-dimensional vector space orthogonal to $\mathcal P$.

Then both $H_1(u)$ and $H_2(u)$ are parallel to $\mathcal L \oplus
v$, where $v \in \mbox{Span }(u,\overrightarrow{A_m B_m} )$,
$\|v\|=1$, $v\perp u$. Furthermore we can choose $ \langle
\overrightarrow{A_m B_m}, v \rangle \ge 0$. Notice that
$\pi_{\mathcal P}(H_1(u)) = H_1(u)\cap \mathcal P=A_m+\R v :=L_A$;
$\pi_{\mathcal P}(H_2(u)) = H_2(u)\cap \mathcal P=B_m+\R v :=L_B$,
and because we have supporting hyperplanes, $\pi_{\mathcal P}(K)$
is contained in the closed strip between $L_A$ and $L_B$. The line
segment $\pi_{\mathcal P}(H_1(u)\cap K)\subset L_A$, and by our
maximality hypothesis, for any $t<0$, $A_m+tv \notin \pi_{\mathcal
P}(H_1(u)\cap K)$. Similarly, $\pi_{\mathcal P}(H_2(u)\cap
K)\subset L_B$, and
 for any $t>0$,
$B_m+tv \notin \pi_{\mathcal P}(H_2(u)\cap K)$.

Since $\pi_{\mathcal P}(K)$ is a closed polygonal convex body,
if we orient the plane so that $\angle (\overrightarrow{A_m B_m}, v) >0$,
then there exists $\eps_0>0$ such that for any unit vector $v' \in \mbox{Span }
(u,\overrightarrow{A_m B_m} )$ with $0< \angle (v,v') < \eps_0$,
then $A_m+\R v'$ is a supporting line for $\pi_{\mathcal P}(K)$,
with $A_m+\R v' \cap \pi_{\mathcal P}(K) = \{ A_m\}$,
and  $B_m+\R v'$ satisfies the analogous properties.
Therefore $H_1':=\pi_{\mathcal P}^{-1} (A_m+\R v')$
and $H_2':=\pi_{\mathcal P}^{-1} (B_m+\R v')$ are
parallel supporting hyperplanes for $K$, orthogonal to a vector $u'$
which can be made arbitrarily close to $u$, such that
$H_1' \cap K \subset A_m + \mathcal L$ and
$H_2' \cap K \subset B_m + \mathcal L$.

Again by our maximality hypothesis, we have that
\[
K \cap (A_m + \mathcal L)
= \{ A_m\} , K \cap (B_m + \mathcal L)= \{ B_m\},
\]
so finally the supporting hyperplanes $H_1'$ and $H_2'$ intersect
$K$ only at $A_m$ and $B_m$ respectively, so $u'\in \mathcal W_K$
and $s_K(u')=s_K(u)$; it is easy to see that $s_K(u')\to s_K(u)$
as $u'\to u$, and we are done with the first part of the
proposition.

Now consider again $u_0 \in S^{n-1}\setminus \mathcal W_K$.
Taking $u \in \mathcal W_K$ close to $u_0$ such that $p_K(u)$
is close to $p_K(u_0)$, and $u',u''$ which tend to $u$ and
come close to the limes superior in Lemma \ref{limw}, we see
that $w'(u_0) \ge p_K(u_0)$ (perform a diagonal process
to have $u'$ and $u''$ tending to $u_0$).

To obtain the reverse inequality, for any $\eps>0$, by the upper semicontinuity
of $p_K$ and the first part of the proposition, there is a neighborhood $U$
of $u_0$ such that for any $u \in U$, $w'(u_1)\le p_K(u_0)+\eps$. If $u', u''\in U$,
we can connect them by an arc $\gamma$ of a great circle and reasoning as in the proof of
Lemma \ref{limw},
\[
|w_K(u')-w_K(u'')|\le \rho(u',u'') \sup_{u\in \gamma} w'(u)
\le (1+\eps) \|u'-u''\| (p_K(u_0)+\eps) ,
\]
reducing $U$ if needed. This proves that $w'(u_0) \le p_K(u_0)$.
\end{proof}

As a consequence of Theorem \ref{main} and the proof above, we obtain the following.
\begin{cor}
\label{global}
For any convex polytope $K\subset \R^n$,
\[
\sup_{u\neq \pm v \in S^{n-1} } \Delta w_K(u,v)
= \tilde M_K=\hat M_K \le M_K.
\]
\end{cor}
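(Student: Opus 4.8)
The plan is to combine the two main inputs already at our disposal: Theorem \ref{main}, which gives the global bound $\Delta w_K(u,v) \le M_K$ for all $u,v$, and Proposition \ref{nongen}, which computes the local variation $w_K'(u) = p_K(u)$ for \emph{every} $u \in S^{n-1}$ (not merely on $\mathcal W_K$) and shows $\tilde M_K = \hat M_K$. The equality $\tilde M_K = \hat M_K$ is given outright by Proposition \ref{nongen}, so the only real content is the identity $\sup_{u \neq \pm v} \Delta w_K(u,v) = \hat M_K$, together with the final inequality $\hat M_K \le M_K$.

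First I would establish the upper bound $\sup_{u\neq \pm v}\Delta w_K(u,v) \le \hat M_K$. The idea is the same dichotomy/integration argument used to finish the proof of Theorem \ref{main}, Part (1): given $u',u''$, join them by an arc $\gamma$ of a great circle, and use that along $\gamma$ the local variation $w_K'(u)$ equals $p_K(u) \le \hat M_K$ by Proposition \ref{nongen}. Since $\rho(u',u'') \sim \|u'-u''\|$ as the points coalesce, a subdivision argument (exactly as in the displayed estimate at the end of the proof of Proposition \ref{nongen}) yields $|w_K(u')-w_K(u'')| \le \rho(u',u'')\,\sup_{u\in\gamma} w_K'(u) \le \rho(u',u'')\,\hat M_K$. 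Dividing by $\rho(u',u'')$ gives the bound. The upper semicontinuity of $p_K$ from Lemma \ref{pusc} is what makes the local supremum over $\gamma$ controllable, so no exceptional set needs to be excised here, in contrast to the original proof of Theorem \ref{main}.

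Next I would prove the reverse inequality $\sup_{u\neq\pm v}\Delta w_K(u,v) \ge \hat M_K$. Let $u_0$ be a point where the maximum defining $\hat M_K$ is attained. By the definition $w_K'(u_0) = \limsup_{u',u''\to u_0}\Delta w_K(u',u'')$ and Proposition \ref{nongen} this limes superior equals $p_K(u_0) = \hat M_K$; hence there exist $u',u''$ arbitrarily close to $u_0$ (in particular with $u' \neq \pm u''$) with $\Delta w_K(u',u'')$ as close to $\hat M_K$ as desired, giving the supremum the value $\hat M_K$.

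Finally, $\hat M_K \le M_K$ is immediate: for every $u$, $s_K(u) \le \d_K$ since $s_K(u)$ is the length of a chord of $K$, and $w_K(u) \ge \om_K$, so $p_K(u) = \sqrt{s_K(u)^2 - w_K(u)^2} \le \sqrt{\d_K^2 - \om_K^2} = M_K$, and taking the maximum over $u$ gives $\hat M_K \le M_K$. The one step requiring genuine care is the upper-bound integration over the great-circle arc: one must justify uniformly along $\gamma$ that small sub-arcs contribute at most $(1+\eps)$ times their length times $(\hat M_K + \eps)$, which is precisely the compactness-plus-upper-semicontinuity argument carried out at the close of Proposition \ref{nongen}. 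I expect this bookkeeping with $\eps$ and the reduction of the covering of $\gamma$ to be the main, though not deep, obstacle.
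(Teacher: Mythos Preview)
Your proposal is correct and matches the paper's intended argument: the paper simply states the corollary as a consequence of Theorem \ref{main} and the proof of Proposition \ref{nongen}, and your expansion via the geodesic integration bound $\Delta w_K(u,v)\le\hat M_K$ together with the reverse inequality from $w_K'(u_0)=p_K(u_0)=\hat M_K$ is exactly how those ingredients combine. The only minor difference is that the upper bound $\sup\Delta w_K\le\tilde M_K$ already falls out of the proof of Theorem \ref{main}(1) using only Lemma \ref{limw} on $\mathcal W_K$ (no need for the full strength of Proposition \ref{nongen} there), but this is a matter of packaging rather than substance.
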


\begin{prop}
\label{bodies}
For any convex body $K\subset \R^n$ one has that
$$
\sup_{u\neq \pm v \in S^{n-1} } \Delta w_K(u,v) \le \hat M_K\mbox{\ \ and\ \ }w_K'(u)\le p_K(u).
$$
\end{prop}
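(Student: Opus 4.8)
The plan is to approximate $K$ by a decreasing sequence of convex polytopes $K_j$ with $\bigcap_j K_j=K$, so that $w_{K_j}\downarrow w_K$ pointwise, and to transfer the polytope results of Corollary \ref{global} and Proposition \ref{nongen} to the limit by means of the upper semicontinuity estimate of Lemma \ref{pusc}. Both desired inequalities will follow once we understand how $\hat M_{K_j}$ and $p_{K_j}$ behave as $j\to\infty$.

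For the global inequality I would argue directly. Fix $u\neq\pm v$. Since $w_{K_j}(u)\to w_K(u)$ and $w_{K_j}(v)\to w_K(v)$, Corollary \ref{global} gives
\[
|w_K(u)-w_K(v)|=\lim_j|w_{K_j}(u)-w_{K_j}(v)|\le\Big(\limsup_j\hat M_{K_j}\Big)\,\rho(u,v).
\]
It remains to check $\limsup_j\hat M_{K_j}\le\hat M_K$. Since $p_{K_j}$ is upper semicontinuous, the maximum defining $\hat M_{K_j}=p_{K_j}(u_j)$ is attained; passing to a subsequence realizing the limes superior and then to a further subsequence with $u_j\to u^*$, Lemma \ref{pusc} yields $\limsup_j\hat M_{K_j}=\limsup_j p_{K_j}(u_j)\le p_K(u^*)\le\hat M_K$. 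This proves the first inequality.

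For the local inequality $w_K'(u)\le p_K(u)$, fix $u$ and $\eps>0$. The crucial step is a uniform-in-$j$ bound: there exist a neighborhood $W$ of $u$ and an index $j_0$ such that $p_{K_j}(v)\le p_K(u)+\eps$ for all $v\in W$ and all $j\ge j_0$. I would prove this by contradiction: a failure produces $v_k\to u$ and $j_k\to\infty$ with $p_{K_{j_k}}(v_k)>p_K(u)+\eps$; after extracting a strictly increasing subsequence of indices, $(K_{j_k})_k$ is again a decreasing sequence with limit $K$, so Lemma \ref{pusc} forces $\limsup_k p_{K_{j_k}}(v_k)\le p_K(u)$, a contradiction. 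Granting this, take $u',u''\in W$ close to $u$ and join them by a short geodesic arc $\gamma\subset W$. By Proposition \ref{nongen}, $w_{K_j}'=p_{K_j}$ on all of $S^{n-1}$, so integrating the upper derivative along $\gamma$ (exactly as in the proof of Proposition \ref{nongen}) gives, for $j\ge j_0$,
\[
|w_{K_j}(u')-w_{K_j}(u'')|\le\rho(u',u'')\sup_{v\in\gamma}p_{K_j}(v)\le\rho(u',u'')\,(p_K(u)+\eps).
\]
Letting $j\to\infty$ converts the left-hand side into $|w_K(u')-w_K(u'')|$, so $\Delta w_K(u',u'')\le p_K(u)+\eps$; taking $\limsup_{u',u''\to u}$ and then $\eps\to0$ yields $w_K'(u)\le p_K(u)$.

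The main obstacle is precisely this uniform neighborhood bound on $p_{K_j}$, i.e.\ the interchange of the two limits $j\to\infty$ and $u',u''\to u$. Because $p_K$ is only upper semicontinuous, not continuous, one cannot simply pass to pointwise limits, and the compactness-and-contradiction argument built on Lemma \ref{pusc} is what makes the two limiting processes compatible. Everything else---the pointwise convergence $w_{K_j}\downarrow w_K$ and the arc-integration of the upper derivative---is routine once the polytope case and Lemma \ref{pusc} are in hand.
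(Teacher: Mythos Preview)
Your argument is correct and follows the same overall strategy as the paper---approximate by polytopes and use Lemma \ref{pusc} to control $\limsup_j p_{K_j}$---but the organization differs in two noteworthy ways. First, the paper works with a single closed geodesically convex subset $\mathcal N\subset S^{n-1}$ and proves the unified estimate $\sup_{u\neq v\in\mathcal N}\Delta w_K(u,v)\le\max_{\mathcal N}p_K$; the global bound comes from taking $\mathcal N$ a hemisphere, and the local bound $w_K'(u)\le p_K(u)$ from shrinking $\mathcal N$ to $\{u\}$ and invoking upper semicontinuity of $p_K$. Your contradiction argument for the uniform-in-$j$ neighborhood bound is equivalent to the paper's direct computation $\limsup_j\max_{\mathcal N}p_{K_j}\le\max_{\mathcal N}p_K$, just packaged differently. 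Second, and more substantively, you invoke Proposition \ref{nongen} (the identity $w_{K_j}'=p_{K_j}$ at every point of $S^{n-1}$) to justify the arc-integration, whereas the paper deliberately avoids it and appeals only to Lemma \ref{limw} via the proof of Theorem \ref{main}(1). This is not merely cosmetic: the analogue of Proposition \ref{nongen} for $d_K$ is left as an open question, so the paper's route transfers verbatim to Proposition \ref{dbodies}, while your route would need to be rewritten there. Your approach is perfectly sound for $w_K$, but the paper's is slightly more economical and more portable.
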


\begin{proof}
We will say that  $\mathcal A \subset S^{n-1}$ is
convex whenever it is the intersection of a convex cone with $S^{n-1}$,
i.e.  $\R_+ \mathcal A$ is convex, i.e. for any two points $a,b \in \mathcal A$,
the geodesic segment (arc of a great circle) from $a$ to $b$ is contained in $\mathcal A$.

Take a sequence of polytopes $K_j$ decreasing to $K$,
and a closed convex subset $\mathcal N \subset S^{n-1}$ such that
for any $u , v \in
\mathcal N$, $\rho(u,v)\le \pi/2$. We claim that
\[
\sup_{u\neq  v \in \mathcal N} \Delta w_K(u,v)\le \max_{\mathcal N} p_{K}.
\]
Indeed, by the proof of Theorem \ref{main} (1), carried out
on geodesics remaining within $\mathcal N$,
\[
| w_{K_j} (u) - w_{K_j} (v) | \le \rho(u,v) \max_{\mathcal N} p_{K_j} .
\]
We claim that $\limsup_j \max_{\mathcal N} p_{K_j} \le \max_{\mathcal N} p_{K}$.
Indeed, take a sequence $(u_j)_j \subset \mathcal N$ such that
$\lim_j p_{K_j}(u_j)= \limsup_j \max_{\mathcal N} p_{K_j}$, and
$u_j\to u \in \mathcal{N}$, then Lemma \ref{pusc} implies that
\[
\lim_j p_{K_j}(u_j) \le p_{K}(u) \le \max_{\mathcal N} p_{K},
\]
and the claim is proved.

To get the statement over the whole sphere, recall that $w_K(u)=w_K(-u)$, and notice that
for any $u,v \in S^{n-1}$, $\rho(u,v)\le \pi/2$ or $\rho(u,-v)\le \pi/2$,
and we can choose a convex set $\mathcal N$ accordingly.

To get the statement about $w_K'(u)$, take a sequence of convex closed
neighborhoods of $u$ in $S^{n-1}$ converging to $\{u\}$,
and apply the upper semicontinuity of $p$.

Notice that we could also deduce the statement about $\Delta w_K$ from
the one about $w_K'$.
\end{proof}

\noindent{\bf Open question.}
Can the  inequalities in  Proposition \ref{bodies} be replaced
by equalities for all convex bodies?

\subsection{Variations of $d_K$.}

In analogy to the beginning of Subsection \ref{varw}, we extend the notations
defined before Lemma \ref{limd}. Given $u\in S^{n-1}$, for any diametral
chord $[AB]$ we have $|AB|=d_K(u)$ and there exists some parallel
supporting hyperplanes for $K$ at $A$ and $B$. We define
\begin{multline*}
r_K (u) :=
\\
\inf \left\{ \mbox{dist }(H_1, H_2): [AB] \mbox{ diametral chord and }
H_1\ni A, H_2\ni B \mbox{ supporting hyperplanes}
\right\},
\end{multline*}
and
\[
q_K(u):=
d_K(u)\sqrt{\frac{d^2_K(u)}{r^2_K(u)}-1}.
\]
For $K$ a convex polytope and $u\in \mathcal R_K$, Lemma \ref{limd} says that $d_K'(u)= q_K(u)$.
\smallskip

\noindent{\bf Open question.}
For $K$  a convex polytope, do we have
$\sup_{u\in \mathcal R_K}q_K(u) = \max_{u\in S^{n-1}}q_K(u)$?
\smallskip

Even without an answer to that question, we can recover a result analogous to Proposition \ref{bodies}.

\begin{lemma}
\label{qlsc}
Let $K_j$ be a decreasing sequence of convex bodies
with limit $K$ and let $u_j \in S^{n-1}$ be a sequence with
$\lim_j u_j = u$. Then $\liminf_j r_{K_j} (u_j) \ge r_K(u)$.

As a consequence, the reverse inequality holds for $q_K$,
$r_K$ is lower semicontinuous and $q_K$ upper semicontinuous on $S^{n-1}$.
\end{lemma}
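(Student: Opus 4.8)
The plan is to mirror the proof of Lemma \ref{pusc}, but with two twists: the inequality now goes the other way (because $r_K$ is an infimum rather than a supremum), and the supporting hyperplanes entering the definition of $r_K$ are not pinned down by the direction $u$ (only the chord is forced to be parallel to $u$), so I must also extract a convergent subsequence of their common normal directions. First I would pass to a subsequence along which $r_{K_j}(u_j)\to\liminf_j r_{K_j}(u_j)$. Since each $r_{K_j}(u_j)$ is an infimum, for every $j$ I would choose a diametral chord $[A_jB_j]$ of $K_j$ in direction $u_j$ together with parallel supporting hyperplanes $H_1^j\ni A_j$, $H_2^j\ni B_j$ of common unit normal $\nu_j$, such that $\mathrm{dist}(H_1^j,H_2^j)\le r_{K_j}(u_j)+1/j$. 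All these objects live in compact sets ($A_j,B_j\in K_1$ and $\nu_j\in S^{n-1}$), so after a further subsequence $A_j\to A$, $B_j\to B$, $\nu_j\to\nu$, whence $H_1^j\to H_1$ and $H_2^j\to H_2$, two parallel hyperplanes with common normal $\nu$.

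Next I would verify that this limiting configuration is admissible for $K$, i.e.\ that it is a competitor in the infimum defining $r_K(u)$. That $A,B\in K$ follows from $K_j$ being decreasing: for $j\ge m$ we have $A_j\in K_j\subset K_m$, and $K_m$ is closed, so $A\in\bigcap_m K_m=K$, and likewise $B\in K$. That $[AB]$ is a chord parallel to $u$ of length $d_K(u)$ uses that $\overrightarrow{A_jB_j}$ is parallel to $u_j\to u$ and that $|A_jB_j|=d_{K_j}(u_j)\to d_K(u)$: indeed $d_{K_j}\downarrow d_K$ uniformly on the compact sphere by Dini, so combining with $u_j\to u$ gives $d_{K_j}(u_j)\to d_K(u)$. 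Hence $|AB|=\lim_j|A_jB_j|=d_K(u)\ge\om_K>0$, so $A\neq B$ and $\overrightarrow{AB}$ is parallel to $u$. Finally $H_1,H_2$ support $K$: passing to the limit in $\langle\nu_j,x\rangle\le\langle\nu_j,A_j\rangle$, valid for all $x\in K\subset K_j$, shows $H_1$ supports $K$ at $A$, and similarly for $H_2$. Thus $[AB]$ is a diametral chord of $K$ with parallel supporting hyperplanes $H_1,H_2$.

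With admissibility in hand I would conclude
\[
r_K(u)\le\mathrm{dist}(H_1,H_2)=\lim_j\mathrm{dist}(H_1^j,H_2^j)\le\liminf_j r_{K_j}(u_j).
\]
The consequences then follow formally. Specializing $K_j\equiv K$ gives $\liminf_j r_K(u_j)\ge r_K(u)$, i.e.\ lower semicontinuity of $r_K$. For $q$, write $Q(d,r):=d\sqrt{d^2/r^2-1}$, which is continuous, increasing in $d$ and decreasing in $r$ on $\{d\ge r>0\}$. From the estimates $\om_K\le r_{K_j}(u_j)\le d_{K_j}(u_j)$ (the second because the gap between parallel supporting hyperplanes through the endpoints is at most the chord length) and $d_{K_j}(u_j)\to d_K(u)$, I would pass to a subsequence along which $q_{K_j}(u_j)\to\limsup_j q_{K_j}(u_j)$ and $r_{K_j}(u_j)\to r_*$ with $r_*\in[r_K(u),d_K(u)]$, where $r_*\ge r_K(u)$ because any subsequential limit dominates the liminf controlled above. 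Monotonicity of $Q$ then yields $\limsup_j q_{K_j}(u_j)=Q(d_K(u),r_*)\le Q(d_K(u),r_K(u))=q_K(u)$, the reverse inequality; the case $K_j\equiv K$ gives upper semicontinuity of $q_K$.

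The step I expect to be the main obstacle is the admissibility check, specifically the nondegeneracy $A\neq B$ and the identification $|AB|=d_K(u)$. This is precisely where the hypothesis that $K_j$ \emph{decreases} to $K$ (rather than converging merely in the Hausdorff sense) is essential, both to place the limit points in $K$ and to guarantee, via the monotone uniform convergence $d_{K_j}\downarrow d_K$, that the chord does not collapse in the limit; all remaining verifications are routine passages to the limit in the defining inequalities.
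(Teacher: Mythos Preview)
Your proof is correct and follows essentially the same compactness-and-limit scheme as the paper: extract subsequential limits of the diametral chords and their parallel supporting hyperplanes, verify that the limit configuration is admissible in the infimum defining $r_K(u)$, and conclude. The only differences are cosmetic---you invoke Dini's theorem for $d_{K_j}(u_j)\to d_K(u)$ where the paper gives a direct two-sided estimate, you use $1/j$-approximate minimizers where the paper tacitly assumes the infimum in $r_{K_j}$ is attained, and you spell out the monotonicity argument for $q_K$ that the paper leaves to the reader.
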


\begin{proof}
We begin by proving that $\lim_j d_{K_j} (u_j)= d_{K} (u)$.  Indeed, since $K\subset K_j$
for all $j$, $d_{K_j} (u_j) \ge d_{K} (u_j) \to d_K(u)$  since $d_K$
is continuous. On the other hand, choosing $A_j$, $B_j$ such that $d_{K_j} (u_j) = |A_j B_j|$,
any convergent subsequences will tend respectively to $A^*$, $B^*$ such that
$(A^*B^*)$ is parallel to $u$, so we deduce that $\limsup_j d_{K_j} (u_j) \le |A^*B^*|\le
d_K(u)$ and we are done.

Considering  a subsequence such that $d_{K_j} (u_j) = |A_j B_j|\to d_K(u)$, we have
$A_j \in K\cap H^j_1$, $B_j \in K\cap H^j_2$ with $\mbox{dist }(H^j_1,H^j_2)= r_{K_j} (u_j)$.
We can choose a further subsequence
such that $r_{K_j} (u_j)\to \liminf_j r_{K_j} (u_j)$, and $H^j_1(u_j)$,
$H^j_2(u_j)$ converge to hyperplanes $H_1$, $H_2$. Then $H_1$ and $H_2$ are
supporting hyperplanes for $K$ at $A^*$ and $B^*$ respectively, and $[A^*;B^*]$
is now a diametral chord for $u$,
so that $\liminf_j r_{K_j} (u_j)=\mbox{dist }(H_1,H_2) \ge r_K(u)$.
\end{proof}

\begin{prop}
\label{dbodies}
For any convex body $K\subset \R^n$ one has that
$$
\sup_{u\neq \pm v \in S^{n-1} } \Delta d_K(u,v) \le \hat N_K\mbox{\ \ and\ \ }d_K'(u)\le q_K(u).
$$
\end{prop}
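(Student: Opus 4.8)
The plan is to mirror, almost verbatim, the proof of Proposition \ref{bodies}, replacing $p_K$ by $q_K$, Lemma \ref{pusc} by Lemma \ref{qlsc}, and the polytope estimate coming from the proof of Theorem \ref{main}(1) by the one coming from the proof of Theorem \ref{main}(2). First I would record that $\hat N_K:=\max_{u\in S^{n-1}}q_K(u)$ is well defined, the maximum being attained because $q_K$ is upper semicontinuous by Lemma \ref{qlsc}. The core claim, from which everything follows, is that for any closed convex $\mathcal N\subset S^{n-1}$ with $\rho(u,v)\le\pi/2$ for all $u,v\in\mathcal N$, one has
\[
\sup_{u\ne v\in\mathcal N}\Delta d_K(u,v)\le\max_{\mathcal N}q_K.
\]

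To prove this, I would take polytopes $K_j\downarrow K$. For each $K_j$, running the dichotomy argument of the proof of Theorem \ref{main}(2) along geodesics that stay inside $\mathcal N$ (which exist because $\mathcal N$ is convex), and using that $d_{K_j}'(u)=q_{K_j}(u)$ for $u\in\mathcal R_{K_j}$ by Lemma \ref{limd}, I obtain
\[
|d_{K_j}(u)-d_{K_j}(v)|\le\rho(u,v)\sup_{\mathcal N\cap\mathcal R_{K_j}}q_{K_j}\le\rho(u,v)\max_{\mathcal N}q_{K_j},\qquad u,v\in\mathcal N.
\]
Next I would show $\limsup_j\max_{\mathcal N}q_{K_j}\le\max_{\mathcal N}q_K$: choosing $u_j\in\mathcal N$ with $q_{K_j}(u_j)=\max_{\mathcal N}q_{K_j}$ and, after passing to a subsequence, $u_j\to u\in\mathcal N$, Lemma \ref{qlsc} gives $\limsup_j q_{K_j}(u_j)\le q_K(u)\le\max_{\mathcal N}q_K$. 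Since $d_{K_j}\downarrow d_K$ pointwise, letting $j\to\infty$ in the displayed inequality yields the claim. From the claim the first assertion of the proposition follows exactly as in Proposition \ref{bodies}: using $d_K(u)=d_K(-u)$ together with the fact that for any $u,v$ either $\rho(u,v)\le\pi/2$ or $\rho(u,-v)\le\pi/2$, I cover all admissible pairs by a suitable convex $\mathcal N$, and $\max_{\mathcal N}q_K\le\hat N_K$. The estimate $d_K'(u)\le q_K(u)$ then comes from applying the claim to a sequence of closed convex neighborhoods of $u$ shrinking to $\{u\}$ and invoking the upper semicontinuity of $q_K$.

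The step requiring the most care is the second displayed inequality for polytopes, and it is precisely here that the argument is genuinely weaker than the width case treated in Proposition \ref{nongen}. There, Proposition \ref{nongen} upgraded $w_K'(u)=p_K(u)$ to \emph{all} $u\in S^{n-1}$; here I only know $d_{K_j}'(u)=q_{K_j}(u)$ on $\mathcal R_{K_j}$, the analogous question $\sup_{\mathcal R_K}q_K=\max_{S^{n-1}}q_K$ being left open. The key observation is that this gap is harmless for an \emph{upper} bound: the dichotomy argument only ever evaluates $q_{K_j}$ at points of $\mathcal R_{K_j}$, and since $\mathcal N\cap\mathcal R_{K_j}\subset\mathcal N$, the supremum over those points is automatically dominated by $\max_{\mathcal N}q_{K_j}$. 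The one routine verification I would still carry out is that the exceptional hyperplanes $H_D$ meet a generic geodesic in finitely many points, so that the continuity steps closing the dichotomy run exactly as in Theorem \ref{main}(2); granting that, the semicontinuity supplied by Lemma \ref{qlsc} is the only additional ingredient, and it is already in place.
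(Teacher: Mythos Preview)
Your proposal is correct and follows exactly the approach the paper takes: the paper's own proof is a single sentence stating that one mirrors the proof of Proposition~\ref{bodies} (which, as you correctly note, does not use Proposition~\ref{nongen}), replacing $p_K$ by $q_K$ and $M_K$ by $N_K$. Your observation that the missing analogue of Proposition~\ref{nongen} is harmless because only an upper bound is needed is precisely the point the paper is making by its parenthetical remark.
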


\begin{proof}
The proof follows the same lines as that of Proposition \ref{bodies} (which
makes no use of Proposition \ref{nongen}), replacing $p_K$ by $q_K$ and $M_K$ by $N_K$.
\end{proof}

\subsection{Examples.}

\begin{exam}
\label{triangle} Let $K=\triangle ABC$, with side lengths $a\le b
\le c$, and heights $h_a, h_b, h_c$.

Then $\d_K=c$, $\omega_K=h_c$, $M_K=\sqrt{c^2-h_c^2}$, $N_K=\frac{c}{h_c}\sqrt{c^2-h_c^2}$.

Let $\hat v$ be a unit vector parallel to the side of length $c$,
$\hat u$ be the unit vector orthogonal to the side of length $b$
such that $\langle \hat u , \hat v\rangle \ge 0$, and $\gamma$ the
geodesic arc between them. Then
$$
\lim_{\gamma\ni u',u''\to \hat u} \Delta w_K(u',u'')= \sup_{u\neq
\pm v \in S^{1} } \Delta w_K(u,v)= \sqrt{c^2-h_b^2}=\hat M_K$$

$$\lim_{\gamma\ni v',v''\to \hat v} \Delta d_K(v',v'')= \sup_{u\neq
\pm v \in S^{1} } \Delta d_K(u,v)=
\frac{c}{h_b}\sqrt{c^2-h_b^2}=\hat N_K.$$

\end{exam}
We see that $\hat M_K=M_K$ and $\hat N_K=N_K$ if and only if $b=c$.

\begin{exam}
\label{box} Let $K=[0,a_1]\times\dots\times[0,a_n],$
$0<a_1\le\dots\le a_n.$ Then $\om_K=a_1$ and
$\delta_K=\sqrt{a_1^2+\dots+a_n^2}.$ Let $u=e_1,$ $v=(a_1e_1+\dots
a_ne_n)/\delta_K,$ and $\gamma$ be the geodesic joining $u$ and
$v$ on $S^{n-1}.$ One may check that
$$
\lim_{\gamma\ni u',u''\to u}\Delta w_K(u',u'')=M_K,\quad
\lim_{\gamma\ni v',v''\to v}\Delta d_K(u',u'')=N_K.
$$
\end{exam}

\begin{exam}
\label{ellipse} Let $\displaystyle K=\left\{\frac{x^2}{a^2}+\frac{y^2}{b^2}=1\right\}.$
Then for $u=(\cos\alpha,\sin\alpha)\in S^1$ we have
$$w_K(u)= 2\sqrt{a^2\cos^2\alpha+b^2\sin^2\alpha},\quad s_K (u)= 2\sqrt
\frac{a^4\cos^2\alpha+b^4\sin^2\alpha}{a^2\cos^2\alpha+b^2\sin^2\alpha},
$$
$$d_K(u)= \frac {2ab}{\sqrt{a^2\sin^2\alpha+b^2\cos^2\alpha}},\quad r_K (u)= 2ab\sqrt
{\frac{a^2\sin^2\alpha+b^2\cos^2\alpha}{a^4\sin^2\alpha+b^4\cos^2\alpha}},
$$
$$p_K(u)=\frac{|(a^2-b^2)\sin2\alpha|}{\sqrt{a^2\cos^2\alpha+b^2\sin^2\alpha}},
\quad q_K(u)=\frac{ab|(a^2-b^2)\sin2\alpha|}{\sqrt{(a^2\cos^2\alpha+b^2\sin^2\alpha)^3}},$$
$$\lim_{u',u''\to u}\Delta w_K(u',u'')=p_K(u),\quad
\lim_{u',u''\to u}\Delta d_K(u',u'')=q_K(u).$$
\end{exam}


\begin{thebibliography}{}

\bibitem{Sol} V. Soltan, {\it Affine diameters of convex bodies -- a survey,}
Expo. Math. 23 (2005), 47-63.

\bibitem{MW} H. Martini, W. Wenzel, {\it A Lipschitz condition for the width function
of convex bodies in arbitrary Minkowski spaces,} Appl. Math. Lett. 22 (2009) 142-145.

\end{thebibliography}
\end{document}